\newtheorem {theorem} {Theorem}
\newtheorem {proposition} [theorem]{Proposition}
\newtheorem {corollary} [theorem]{Corollary}
\newcommand{\R}{\mathbb{R}}
\def \e {\varepsilon}
\begin{document}

\title[Flow Curvature Manifold and Energy of Generalized Li\'{e}nard Systems]
{Flow Curvature Manifold and Energy\\ of Generalized Li\'{e}nard Systems}

\author[J.M. Ginoux, D. Lebiedz, J. Llibre]
{Jean-Marc Ginoux$^1$, Dirk Lebiedz$^2$ and Jaume Llibre$^3$}

\address{$^1$ Aix Marseille Univ, Universit\'{e} de Toulon, CNRS, CPT, Marseille, France}
\email{ginoux@univ-tln.fr}

\address{$^2$ Institute of Numerical Mathematics, Ulm, Germany}
\email{dirk.lebiedz@uni-ulm.de}

\address{$^3$ Departament de Matem\`{a}tiques,
Universitat Aut\`{o}noma de Barcelona, 08193 Bellaterra, Barcelona,
Spain} \email{jllibre@mat.uab.cat}

\subjclass{}

\keywords{Generalized Li\'{e}nard systems, singularly perturbed systems, Flow Curvature Method.}

\begin{abstract}
In his famous book entitled \textit{Theory of Oscillations}, Nicolas Minorsky wrote: ``\textit{each time the system absorbs energy the curvature of its trajectory decreases} and \textit{vice versa}''. According to the \textit{Flow Curvature Method}, the location of the points where the \textit{curvature of trajectory curve}, integral of such planar \textit{singularly dynamical systems}, vanishes directly provides a first order approximation in $\varepsilon$ of its \textit{slow invariant manifold} equation. By using this method, we prove that, in the $\varepsilon$-vicinity of the \textit{slow invariant manifold} of generalized Li\'{e}nard systems, the \textit{curvature of trajectory curve} increases while the \textit{energy} of such systems decreases. Hence, we prove Minorsky's statement for the generalized Li\'{e}nard systems. Then, we establish a relationship between \textit{curvature} and \textit{energy} for such systems. These results are then exemplified with the classical Van der Pol and generalized Li\'{e}nard \textit{singularly perturbed systems}.
\end{abstract}

\maketitle

\section{Introduction}
\label{sec1}

At the end of the 1930s, a general equation of \textit{self-sustained oscillations} (\ref{eq1}) was stated by the French engineer Alfred Li\'{e}nard \cite{Lienard}. It encompassed the prototypical equation of the Dutch physicist Balthasar Van der Pol \cite{Vdp1926} modelling the so-called \textit{relaxation oscillations}\footnote{For more details see J.-M. Ginoux \cite{Ginoux2017}.}.

\begin{equation}
\label{eq1}
\dfrac{d^2x}{dt^2} + \omega f\left( x \right) \dfrac{dx}{dt} + \omega^2 x = 0.
\end{equation}

Less than fifteen years later, a more general form was provided by the American mathematicians Norman Levinson and his former student Oliver K. Smith \cite{LeviSmith}:

\begin{equation}
\label{eq2}
\dfrac{d^2x}{dt^2} + \mu f\left( x \right) \dfrac{dx}{dt} + g\left( x \right) = 0.
\end{equation}

At that time, the classical geometric theory of differential equations developed originally by Andronov \cite{Andro1}, Tikhonov \cite{Tikh} and
Levinson \cite{Lev} stated that \textit{singularly perturbed systems} possess \textit{invariant manifolds} on which trajectories evolve slowly, and toward which nearby orbits contract exponentially in time (either forward or backward) in the normal directions. These manifolds have been called asymptotically stable (or unstable) \textit{slow invariant manifolds}\footnote{In other articles the {\it slow manifold} is the approximation of order $O(\e)$ of the {\it slow invariant manifold}.}. Then, Fenichel \cite{Fen5,Fen6,Fen7,Fen8} theory\footnote{The theory of invariant manifolds for an ordinary  differential equation is based on the work of Hirsch, \textit{et al.} \cite{Hirsch}} for the \textit{persistence of normally hyperbolic invariant manifolds} enabled to establish the \textit{local invariance} of \textit{slow invariant manifolds} that possess both expanding and contracting directions and which were labeled \textit{slow invariant manifolds}.

During the last century, various methods have been developed to compute the \textit{slow invariant manifold} or, at least an asymptotic expansion in power of $\e$. The seminal works of Wasow \cite{Wasow}, Cole \cite{Cole}, O'Malley \cite{Malley1,Malley2} and Fenichel \cite{Fen5,Fen6,Fen7,Fen8} to name but a few, gave rise to the so-called \textit{Geometric Singular Perturbation Method}. According to this theory, existence as well as local invariance of the \textit{slow invariant manifold} of \textit{singularly perturbed systems} has been stated. Then, the determination of the \textit{slow invariant manifold} equation turned into a regular perturbation problem in which one generally expected the asymptotic validity of such expansion to breakdown \cite{Malley2}. Fifteen years ago, a new approach of $n$-dimensional singularly perturbed dynamical systems of ordinary differential equations with two time scales, called \textit{Flow Curvature Method} has been developed \cite{Gin}. In dimension two, it consists in considering the \textit{trajectory curves} integral of such systems as \textit{plane} curves. Based on the use of local metric properties of \textit{curvature} resulting from \textit{Differential Geometry}, this method which does not require the use of asymptotic expansions, states that the location of the points where the local \textit{curvature} of \textit{trajectory curves} of such systems, vanishes, directly provides a first order approximation in $\varepsilon$ of the \textit{slow invariant manifold} equation associated with such two-dimensional or planar \textit{singularly perturbed systems}. This method gives an implicit non intrinsic equation, because it depends on the euclidean metric. A 'kinetic energy metric' has been introduced in \cite{LebReSie} for chemical kinetic systems and an extremum principle for computing \textit{slow invariant manifolds} has been formulated \cite{LebSieUn,LebSie} which can be viewed as minimum curvature geodesics. In \cite{HeiLeb} a curvature-based differential geometry formulation for the \textit{slow manifold} problem has been used for the purpose of a coordinate-independent formulation of the invariance equation.

In his famous book entitled \textit{Theory of Oscillations}, the Russian mathematician Nicolas Minorsky \cite{Minorsky1967} wrote:

\smallskip
\begin{quote}
``\textit{each time the system absorbs energy the curvature of its trajectory decreases} and \textit{vice versa} when the energy is supplied by the system (e. g. braking) the curvature increases.''
\end{quote}
\smallskip

Thus, according to Minorsky, \textit{energy} and \textit{curvature of the trajectory} are linked by a relationship that he unfortunately didn't give. So, the aim of this work is to prove this statement in the $\varepsilon$-vicinity of the \textit{slow invariant manifold} of generalized Li\'{e}nard systems and to establish this relationship for such systems.
The paper is organized as follows. In section 2, we briefly present the definitions of \textit{singularly perturbed systems}. Then, we prove that generalized Li\'{e}nard systems are planar \textit{singularly perturbed systems}. In section 3, we recall Li\'{e}nard's assumptions for which the generalized Li\'{e}nard systems has a \textit{unique stable limit cycle} and so, a \textit{slow invariant manifold}. In section 4, we recall the main features of the \textit{Flow Curvature Method} according to which the \textit{curvature of the trajectory curve}, integral of planar \textit{singularly perturbed systems} defines a \textit{flow curvature manifold} and we state that the location of the points where this manifold vanishes directly provides a first order approximation in $\varepsilon$ of its \textit{slow invariant manifold} equation. Then, we prove  Minorsky's statement for the generalized Li\'{e}nard systems and establish a relationship between \textit{curvature} and \textit{energy} for such systems. In section 5, we exemplify these results with the classical Van der Pol \textit{singularly perturbed system}. Discussion and perspectives are presented in section 6.

\section{Singularly perturbed systems}
\label{sec2}

Thus, according to Tikhonov \cite{Tikh}, Takens \cite{Tak}, Jones \cite{Jones} and Kaper \cite{Kaper} \textit{singularly perturbed systems} may be defined such as:

\begin{equation}
\label{eq3}
\begin{array}{*{20}c}
 {{\vec {x}}' = \vec {f}\left( {\vec {x},\vec {y},\varepsilon }
\right),\mbox{ }} \hfill \\
 {{\vec {y}}' = \varepsilon \vec {g}\left( {\vec {x},\vec {y},\varepsilon }
\right)}. \hfill \\
\end{array}
\end{equation}

where $\vec {x} \in \mathbb{R}^m$, $\vec {y} \in \mathbb{R}^p$, $\varepsilon \in \mathbb{R}^ + $, and the prime denotes differentiation with respect to the independent variable $t$. The functions $\vec {f}$ and $\vec {g}$ are assumed to be $C^\infty$ functions\footnote{In certain applications these functions will be
supposed to be $C^r$, $r \geqslant 1$.} of $\vec {x}$, $\vec {y}$ and $\varepsilon$ in $U\times I$, where $U$ is an open subset of
$\mathbb{R}^m\times \mathbb{R}^p$ and $I$ is an open interval containing $\varepsilon = 0$.

\smallskip

In the case when $0 < \varepsilon \ll 1$, i.e., $\e$ is a small positive number, the variable $\vec {x}$ is called \textit{fast} variable, and $\vec {y}$ is called \textit{slow} variable. Using Landau's notation: $O\left( {\varepsilon^k} \right)$ represents a function $f$ of $x$ and $\varepsilon $ such that $f(u,\e)/\e^k$ is bounded for positive $\e$  going to zero, uniformly for $u$ in the given domain. It is used to consider that generally $\vec {x}$ evolves at an $O\left( 1 \right)$ rate; while $\vec {y}$ evolves at an $O\left( \varepsilon \right)$ \textit{slow} rate. Reformulating system (\ref{eq3}) in terms of the rescaled variable $\tau = \varepsilon t$, we obtain

\begin{equation}
\label{eq4}
\begin{aligned}
\varepsilon \dot {\vec {x}} & = \vec{f} \left( {\vec{x},\vec{y}, \varepsilon} \right), \\
\dot {\vec {y}} & = \vec {g}\left( {\vec{x}, \vec{y},\varepsilon }
\right).
\end{aligned}
\end{equation}

The dot represents the derivative with respect to the new independent variable $\tau $.

\smallskip

The independent variables $t$ and $\tau $ are referred to the \textit{fast} and \textit{slow} times, respectively, and (\ref{eq3}) and (\ref{eq4}) are called the \textit{fast} and \textit{slow} systems, respectively. These systems are equivalent whenever $\varepsilon \ne 0$, and they are labeled \textit{singular perturbation problems} when $0 < \varepsilon \ll 1$. The label ``singular'' stems in part from the discontinuous limiting behavior in system (\ref{eq3}) as $\varepsilon \to 0$.

\smallskip

In such case system (\ref{eq4}) leads to a differential-algebraic system called \textit{reduced slow system} whose dimension decreases from $m + p = n$ to $p$. Then, the \textit{slow} variable $\vec {y} \in \mathbb{R}^p$ partially evolves in the submanifold $M_0$ called the \textit{critical manifold}\footnote{It corresponds to the approximation of the slow invariant manifold, with an error of $O(\e)$.} and defined by

\begin{equation}
\label{eq5} M_0 := \left\{ {\left( {\vec {x},\vec {y}} \right):\vec
{f}\left( {\vec {x},\vec {y},0} \right) = {\vec {0}}} \right\}.
\end{equation}

When $D_xf$ is invertible, thanks to Implicit Function Theorem, $M_0 $ is given by the graph of a $C^\infty $ function $\vec {y} = \vec {F}_0 \left( \vec {x} \right)$ for $\vec {x} \in D$, where $D\subseteq \mathbb{R}^p$ is a compact, simply connected domain and the boundary of $D$ is an $(p - 1)$--dimensional $C^\infty$ submanifold\footnote{The set D is overflowing invariant with respect to (\ref{eq4}) when $\varepsilon = 0$.}.

\smallskip

According to Fenichel theory \cite{Fen5, Fen6, Fen7, Fen8} if $0 < \varepsilon \ll 1$ is sufficiently small, then there exists a function $\vec {F}\left( {\vec {x},\varepsilon } \right)$ defined on D such that the manifold

\begin{equation}
\label{eq6} M_\varepsilon := \left\{ {\left( {\vec {x},\vec {y}}
\right):\vec {y} = \vec {F}\left( {\vec {x},\varepsilon } \right)}
\right\},
\end{equation}

is locally invariant under the flow of system (\ref{eq3}). Moreover, there exist perturbed local stable (or attracting) $M_a$ and unstable (or repelling) $M_r$ branches of the \textit{slow invariant manifold} $M_\varepsilon$. Thus, normal hyperbolicity of $M_\e$ is lost via a saddle-node bifurcation of the \textit{reduced slow system} (\ref{eq4}).

In dimension two, planar \textit{singularly perturbed dynamical systems} (\ref{eq4}) for which with $\vec {x} \in \mathbb{R}^1$, $\vec {y} \in \mathbb{R}^{1}$, i.e. $(m,p)=(1,1)$ read:

\begin{equation}
\label{eq7}
\begin{aligned}
\varepsilon \dot {x} & = f \left( x, y , \varepsilon \right), \\
          \dot { y } & = g \left( x, y ,\varepsilon \right).
\end{aligned}
\end{equation}

\section{Generalized Li\'{e}nard systems}
\label{sec3}

Starting from the generalized Li\'{e}nard equation (\ref{eq2}) which is a paradigm for \textit{self-sustained oscillations} and by posing: $t \to \mu t$ and $\mu = 1 / \sqrt{\e}$, we have:

\begin{equation}
\label{eq8}
\begin{aligned}
\varepsilon \dot {x} & = y - F\left( x \right), \\
            \dot {y} & = - g \left( x \right).
\end{aligned}
\end{equation}

It is thus obvious that generalized Li\'{e}nard system (\ref{eq8}) is a \textit{singularly perturbed dynamical systems} (\ref{eq4}) for which with $\vec {x} \in \mathbb{R}^1$, $\vec {y} \in \mathbb{R}^{1}$, i.e. $(m,p)=(1,1)$, i.e. planar \textit{singularly perturbed system}.\\

According to Lefschetz \cite{Lefschetz}, under the following assumptions:\\

\begin{itemize}
\item[I.] $f\left(x\right)$ is even, $g\left(x\right)$ is odd, $x g\left(x\right) > 0$ for all $x \neq 0$; $f\left(0\right) < 0$;
\item[II.] $f\left(x\right)$ and $g\left(x\right)$ are continuous for all $x$; $g\left(x\right)$ satisfies Lipschitz condition for all $x$;
\item[III.] $F\left(x\right) \to \pm \infty$ with $x$;
\item[IV.] $F\left(x\right)$ has a single positive zero $x = a$ and is monotone increasing for $x \geqslant a$,
\end{itemize}

the generalized Li\'{e}nard equation (\ref{eq2}), as well as the generalized Li\'{e}nard system (\ref{eq8}) has a \textit{unique stable limit cycle} and so, possesses a \textit{slow invariant manifold}.

\section{Flow Curvature Method}
\label{sec4}

Fifteen years ago, a new approach called \textit{Flow Curvature Method} and based on the use of \textit{Differential Geometry} properties of \textit{curvatures} has been developed by Ginoux \textit{et al.} \cite{GiRo1, GiRo2,Gin}. According to this method, the \textit{curvature of the flow} of \textit{trajectory curve} integral of any $n$-dimensional dynamical system defines a \textit{manifold} associated with this system and called \textit{flow curvature manifold}. In the case of $n$-dimensional \textit{singularly perturbed system} (\ref{eq4}) for which with $\vec {x} \in \mathbb{R}^1$, $\vec {y} \in \mathbb{R}^{n-1}$, i.e. $(m,p)=(1,n-1)$, it has been stated by Ginoux \textit{et al.} \cite{GiRo1, GiRo2,Gin} that the location of the points where this \textit{flow curvature manifold} vanishes directly provides a $(n - 1)$-order approximation in $\e$ of its \textit{slow manifold}, the \textit{invariance} of which is stated according to Darboux theorem \cite{Darboux}. The cases of three and four-dimensional \textit{singularly perturbed dynamical system} (\ref{eq4}) for which $(m,p)=(1,1)$, $(m,p)=(2,1)$, $(m,p)=(3,1)$, $(m,p)=(2,2)$ and $(m,p)=(2,3)$ have been also analyzed by Ginoux \textit{et al.} \cite{GinLi1, GinLi2, GinLi3,GinLi4,GinLi5,GinLi6}.

In the case of two-dimensional or planar \textit{singularly perturbed dynamical systems} (\ref{eq7}) for which with $\vec {x} \in \mathbb{R}^1$, $\vec {y} \in \mathbb{R}^{1}$, i.e. $(m,p)=(1,1)$ we have the following result.

\begin{proposition}
\label{prop1} The location of the points where the \textit{curvature of the flow}, i.e. the \textit{curvature of the trajectory curve} $\vec {X} = \left( x, y \right)$, integral of any two-dimensional or planar \textit{singularly perturbed system} \eqref{eq7} vanishes directly provides a first order approximation in $\varepsilon$ of its its one-dimensional \textit{slow manifold} $M_{\varepsilon}$, the equation of which reads

\begin{equation}
\label{eq9}
\phi ( {\vec {X}, \varepsilon} ) = \det(\ddot {\vec {X}}, \dot {\vec {X}}) = 0
\end{equation}

where $\dot {\vec {X}}$ and $\ddot {\vec {X}}$ represent the time derivatives of $\vec {X}  = (x, y)^t$.
\end{proposition}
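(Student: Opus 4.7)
The plan is to compute $\phi(\vec{X}, \varepsilon) = \det(\ddot{\vec{X}}, \dot{\vec{X}})$ in closed form from the vector field (\ref{eq7}) and then compare, order by order in $\varepsilon$, the zero set of $\phi$ with the Fenichel expansion of the slow manifold $M_\varepsilon$ whose existence is guaranteed by the theory recalled in Section~\ref{sec2}.

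First I would write $\dot{\vec{X}} = (f/\varepsilon,\, g)^{t}$ and apply the chain rule, treating $\varepsilon$ as a time-independent parameter so that no $f_{\varepsilon}$ term appears at this stage. A direct calculation yields $\ddot{\vec{X}}$ as a vector whose components are rational in $\varepsilon$ of orders $-2$ and $-1$, and after multiplying through by $\varepsilon^{2}$ the determinant collapses to
\begin{equation*}
\varepsilon^{2}\,\phi(\vec{X}, \varepsilon) \;=\; f\bigl(f_{x}g - f g_{x}\bigr) \;+\; \varepsilon\, g\bigl(f_{y}g - f g_{y}\bigr),
\end{equation*}
with all subscripts denoting partial derivatives evaluated at $(x, y, \varepsilon)$.

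Next I would look for a solution of $\phi = 0$ of the form $y = \psi(x, \varepsilon) = \psi_{0}(x) + \varepsilon\,\psi_{1}(x) + O(\varepsilon^{2})$. The order-zero term forces $f(x, \psi_{0}, 0) = 0$, so $\psi_{0}$ coincides with the graph $F_{0}$ of the critical manifold (\ref{eq5}). Substituting $y = F_{0} + \varepsilon\,\psi_{1}$, Taylor-expanding in $\varepsilon$, and extracting the coefficient of $\varepsilon$ produces a linear equation for $\psi_{1}$ that I would solve using the Implicit Function Theorem identity $F_{0x} = -f_{x}/f_{y}$ valid on $M_{0}$. In parallel, I would Taylor-expand the Fenichel manifold $y = F(x, \varepsilon) = F_{0}(x) + \varepsilon F_{1}(x) + O(\varepsilon^{2})$ directly from its invariance equation $\varepsilon\, g(x, F, \varepsilon) = F_{x}(x,\varepsilon)\,f(x, F, \varepsilon)$, and solve at order $\varepsilon$ for $F_{1}$. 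The calculation will produce exactly the same closed-form expression for $F_{1}$ as for $\psi_{1}$, so the two manifolds coincide up to $O(\varepsilon^{2})$, which is the precise meaning of \emph{first order approximation in $\varepsilon$}.

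The main obstacle will be the careful bookkeeping of the $\varepsilon$-dependence of $f$ and $g$ themselves: when $f_{\varepsilon}$ and $g_{\varepsilon}$ are nonzero, additional terms enter the Taylor expansions of both $\phi = 0$ and of the Fenichel invariance equation, and one must verify that exactly the same $f_{\varepsilon}/f_{y}$ correction appears in both $\psi_{1}$ and $F_{1}$ so the expressions match identically. A secondary hypothesis, used implicitly throughout, is the normal hyperbolicity of $M_{0}$ (that is $f_{y} \neq 0$), which guarantees that $F_{0x}$ is well-defined and the first-order correction is unique; this is exactly the regime in which Fenichel's theorem provides $M_{\varepsilon}$ in the first place, so the hypothesis is natural rather than restrictive.
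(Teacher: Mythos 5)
Your plan is correct and is essentially the argument behind the paper's proof, which the paper gives only by citation to \cite[p.~185 and next]{Gin}: one computes $\varepsilon^{2}\phi = f\,(f_{x}g - f g_{x}) + \varepsilon\, g\,(f_{y}g - f g_{y})$ for \eqref{eq7} and matches the branch of its zero set near the critical manifold, order by order in $\varepsilon$, against the expansion of $M_{\varepsilon}$ obtained from the invariance equation; carrying out your order-$\varepsilon$ step does yield $\psi_{1}=F_{1}=-g/f_{x}-f_{\varepsilon}/f_{y}$ on $f(x,F_{0}(x),0)=0$, with the $f_{\varepsilon}$ contributions appearing identically on both sides as you anticipated. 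Two small corrections: for \eqref{eq7} normal hyperbolicity of $M_{0}$ means $f_{x}\neq 0$ (the fast variable is $x$), whereas $f_{y}\neq 0$ is the separate condition letting you write $M_{0}$ as a graph $y=F_{0}(x)$, and both of these, together with $g\neq 0$, are what you actually divide by; moreover, at order zero the equation factors as $f\,(f_{x}g-f g_{x})=0$, so $f(x,\psi_{0},0)=0$ is the branch you select (the other factor gives a distinct component of the curvature zero set), not a forced consequence.
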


\begin{proof}
For proof of this proposition see \cite[p. 185 and next]{Gin}.
\end{proof}

\subsection{Invariance}

According to Schlomiuk \cite{Schlomiuk} and Llibre \textit{et al.} \cite{LLibreMedrado} the concept of \textit{invariant manifold} has
been originally introduced by Gaston Darboux \cite[p. 71]{Darboux} in a memoir entitled: \textit{Sur les \'{e}quations diff\'{e}rentielles
alg\'{e}briques du premier ordre et du premier degr\'{e}} and can be stated as follows.

\begin{proposition}
\label{prop2}
The \textit{manifold} defined by $\phi ( \vec {X}, \varepsilon) = 0$ where $\phi $ is a $C^1$ in an open set U, is \textit{invariant}
with respect to the flow of \eqref{eq7} if there exists a $C^1$ function denoted by $\kappa ( \vec {X}, \varepsilon)$ and called
cofactor which satisfies

\begin{equation}
\label{eq10}
L_{\overrightarrow V } \phi ( \vec {X}, \varepsilon) = \kappa( \vec {X}, \varepsilon) \phi ( \vec {X}, \varepsilon),
\end{equation}

for all $\vec {X} \in U$, and with the Lie derivative operator defined as

\[
L_{\overrightarrow V } \phi = \overrightarrow V \cdot \overrightarrow \nabla \phi = \sum\limits_{i = 1}^n {\frac{\partial
\phi }{\partial x_i }\dot {x}_i } = \frac{d\phi }{dt}.
\]

\end{proposition}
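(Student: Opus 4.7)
The plan is to use the standard Darboux argument: reduce invariance of $\{\phi=0\}$ to the behavior along individual trajectories, where the cofactor condition turns the evolution of $\phi$ into a scalar linear ODE whose zero solutions are preserved.

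First, I would fix an arbitrary trajectory $\vec{X}(t)$ of system \eqref{eq7} with initial point $\vec{X}(0)\in U$ lying on the manifold, i.e. $\phi(\vec{X}(0),\varepsilon)=0$. To prove invariance it suffices to show $\phi(\vec{X}(t),\varepsilon)=0$ for all $t$ in the maximal interval of existence for which the trajectory remains in $U$. Differentiating $\phi$ along the flow using the chain rule and the definition of the Lie derivative supplied in the proposition gives
\begin{equation*}
\frac{d}{dt}\phi(\vec{X}(t),\varepsilon) \;=\; \sum_i \frac{\partial \phi}{\partial x_i}\,\dot x_i \;=\; L_{\vec{V}}\phi(\vec{X}(t),\varepsilon).
\end{equation*}

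Next I would invoke the hypothesis \eqref{eq10} to replace $L_{\vec{V}}\phi$ by $\kappa\,\phi$, obtaining along the chosen trajectory the scalar linear ODE
\begin{equation*}
\frac{d}{dt}\phi(\vec{X}(t),\varepsilon) \;=\; \kappa(\vec{X}(t),\varepsilon)\,\phi(\vec{X}(t),\varepsilon).
\end{equation*}
Since $\vec{V}$ and $\kappa$ are $C^1$, the function $t\mapsto \kappa(\vec{X}(t),\varepsilon)$ is continuous, so this ODE has a unique solution given by
\begin{equation*}
\phi(\vec{X}(t),\varepsilon) \;=\; \phi(\vec{X}(0),\varepsilon)\,\exp\!\left(\int_0^{t}\kappa(\vec{X}(s),\varepsilon)\,ds\right).
\end{equation*}
With $\phi(\vec{X}(0),\varepsilon)=0$ this forces $\phi(\vec{X}(t),\varepsilon)\equiv 0$, so the trajectory remains on the manifold, which is the invariance claim.

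There is essentially no deep obstacle: the argument is a one-line integration of a linear ODE once the chain rule has been applied. The only points requiring a bit of care are (i) formally verifying that the Lie derivative formula in the statement coincides with $(d/dt)\phi$ along solutions, which is purely the chain rule, and (ii) ensuring that the exponential representation is valid on the time interval of interest, which follows from continuity of $\kappa$ along the trajectory inside $U$. Because the proposition is classical and the author refers to Darboux \cite{Darboux} and the treatments in \cite{Schlomiuk, LLibreMedrado}, I would keep the proof to this short computation and cite those references for the converse direction (existence of a cofactor when the manifold is invariant), which is not needed for the use made of the proposition in Section 4.
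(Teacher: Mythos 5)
Your proof is correct: the chain rule identifies $L_{\vec V}\phi$ with $\tfrac{d}{dt}\phi(\vec X(t),\varepsilon)$ along solutions, the cofactor hypothesis turns this into a scalar linear ODE with continuous coefficient $t\mapsto\kappa(\vec X(t),\varepsilon)$, and the integrating-factor (exponential) representation forces $\phi(\vec X(t),\varepsilon)\equiv 0$ on the interval where the orbit stays in $U$, which is exactly the invariance asserted. Note that the paper itself gives no in-text argument for this proposition --- it simply refers to \cite[p.~187 and next]{Gin} and to the computation in the subsequent corollary --- so your write-up is precisely the classical Darboux argument that the cited source relies on, and you are right that only this direction (cofactor implies invariance) is needed for the use made of the proposition in Section~4.
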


\begin{proof}
For a proof of this proposition see \cite[p. 187 and next]{Gin} and below.
\end{proof}

\begin{corollary}
For any two-dimensional or planar \textit{singularly perturbed dynamical system} \eqref{eq7}, we have:

\begin{equation}
\label{eq11}
L_{\overrightarrow V } \phi = \dfrac{d\phi }{dt} = {\rm Tr} \left( J \right) \phi + \det( \dfrac{dJ}{dt} \dot{\vec{X}}, \dot{\vec{X}} ).
\end{equation}

where $J$ is the Jacobian matrix of the vector field \eqref{eq7}.

\end{corollary}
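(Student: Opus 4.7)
The plan is to differentiate the determinant $\phi = \det(\ddot{\vec X},\dot{\vec X})$ along the flow, using the fact that on trajectories the second derivative is determined by the Jacobian.

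First I would rewrite the flow curvature function in a form suited to differentiation. Along a trajectory of \eqref{eq7}, $\dot{\vec X}=\vec V(\vec X)$, so by the chain rule $\ddot{\vec X}=J\dot{\vec X}$, where $J=D\vec V$ is the Jacobian of the vector field. Hence
\[
\phi(\vec X,\varepsilon)=\det(J\dot{\vec X},\dot{\vec X}).
\]
Now I would apply the standard bilinearity/product rule for the determinant of two column vectors:
\[
\frac{d\phi}{dt}=\det\!\Bigl(\frac{d(J\dot{\vec X})}{dt},\dot{\vec X}\Bigr)+\det\!\bigl(J\dot{\vec X},\ddot{\vec X}\bigr).
\]
Expanding $\frac{d}{dt}(J\dot{\vec X})=\dot J\,\dot{\vec X}+J\ddot{\vec X}$ and observing that the last term above is $\det(\ddot{\vec X},\ddot{\vec X})=0$, I obtain
\[
\frac{d\phi}{dt}=\det\!\bigl(\dot J\,\dot{\vec X},\dot{\vec X}\bigr)+\det\!\bigl(J\ddot{\vec X},\dot{\vec X}\bigr).
\]

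The key algebraic step is the well-known $2\times 2$ identity
\[
\det(Au,v)+\det(u,Av)=\mathrm{Tr}(A)\,\det(u,v),
\]
valid for any $2\times 2$ matrix $A$ and vectors $u,v\in\mathbb R^{2}$ (it follows either from direct coordinate expansion or from $\frac{d}{ds}\det(e^{sA}u,e^{sA}v)\big|_{s=0}=\mathrm{Tr}(A)\det(u,v)$). Applying this with $A=J$, $u=\ddot{\vec X}$, $v=\dot{\vec X}$ and noting $J\dot{\vec X}=\ddot{\vec X}$ gives
\[
\det(J\ddot{\vec X},\dot{\vec X})=\mathrm{Tr}(J)\det(\ddot{\vec X},\dot{\vec X})-\det(\ddot{\vec X},\ddot{\vec X})=\mathrm{Tr}(J)\,\phi.
\]

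Substituting this into the previous display yields exactly \eqref{eq11}:
\[
L_{\vec V}\phi=\frac{d\phi}{dt}=\mathrm{Tr}(J)\,\phi+\det\!\Bigl(\frac{dJ}{dt}\dot{\vec X},\dot{\vec X}\Bigr).
\]
The argument is essentially mechanical; the only nontrivial ingredient is the two-dimensional trace identity for the determinant, and I do not anticipate a genuine obstacle—one just has to be careful to use $\ddot{\vec X}=J\dot{\vec X}$ on solutions (so the calculation is valid along the flow, as required by the Lie-derivative interpretation).
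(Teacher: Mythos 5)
Your proposal is correct and follows essentially the same route as the paper: both rely on $\ddot{\vec X}=J\dot{\vec X}$, differentiate the determinant (the paper via $\dddot{\vec X}=J\ddot{\vec X}+\frac{dJ}{dt}\dot{\vec X}$ substituted into $\det(\dddot{\vec X},\dot{\vec X})$, you via the product rule on $\det(J\dot{\vec X},\dot{\vec X})$, which is the same computation), and then invoke the identical $2\times 2$ trace identity $\det(Au,v)+\det(u,Av)=\mathrm{Tr}(A)\det(u,v)$ together with $\det(\ddot{\vec X},\ddot{\vec X})=0$ to isolate $\mathrm{Tr}(J)\,\phi$. No gaps; the only difference is notational (determinants versus wedge products with $\vec{k}$).
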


\begin{proof}
For any $n$-dimensional dynamical system as well as any $n$-dimensional \textit{singularly perturbed system} (\ref{eq4}), it is easy to prove that:

\begin{equation}
\label{eq12}
\ddot{\vec{X}} = J \dot{\vec{X}}.
\end{equation}

The time derivative of this equation (\ref{eq12}) gives:

\begin{equation}
\label{eq13}
\dddot{\vec{X}} = J \ddot{\vec{X}} + \dfrac{dJ}{dt} \dot{\vec{X}}.
\end{equation}

Moreover, let's notice that for two-dimensional dynamical systems, the \textit{slow invariant manifold} (\ref{eq9}) can be also written as:

\begin{equation}
\label{eq14}
\phi ( {\vec {X}, \varepsilon} ) = \det( \ddot {\vec {X}}, \dot {\vec {X}}) = ( \ddot {\vec {X}} \wedge \dot{\vec {X}} ) \cdot  \vec{k} =  0
\end{equation}

where $\vec{k}$ is the unit vector of the $z$-axis. Time derivative of (\ref{eq14}) provides:

\begin{equation}
\label{eq15}
\frac{d\phi }{dt} =  ( \dddot {\vec {X}} \wedge \dot {\vec {X}} ) \cdot \vec{k}.
\end{equation}

By replacing (\ref{eq13}) in (\ref{eq15}), we obtain:

\begin{equation}
\label{eq16}
\frac{d\phi }{dt} = \det( \dddot {\vec {X}},\dot {\vec {X}}) = ( J \ddot {\vec {X}} \wedge \dot {\vec {X}} ) \cdot \vec{k} + ( \dfrac{dJ}{dt} \dot{\vec{X}} \wedge \dot{\vec{X}} ) \cdot \vec{k}.
\end{equation}

By using the following well-known identity:

\begin{equation}
\label{eq17}
J \vec{a} \wedge \vec{b} + \vec{a} \wedge J \vec{b} = {\rm Tr}(J) (\vec{a} \wedge \vec{b})
\end{equation}

where $J$ is the Jacobian matrix, we find that:

\begin{equation}
\label{eq18}
\ddot {\vec {X}} \wedge J \dot {\vec {X}} + J \ddot {\vec {X}} \wedge \dot {\vec {X}} = {\rm Tr}(J) ( \ddot {\vec {X}} \wedge \dot {\vec {X}})
\end{equation}

Finally, we obtain:

\begin{equation}
\label{eq19}
\dfrac{d\phi }{dt} = {\rm Tr}(J) ( \ddot {\vec {X}} \wedge \dot {\vec {X}} ) \cdot \vec{k} + ( \dfrac{dJ}{dt} \dot{\vec{X}} \wedge  \dot{\vec{X}} ) \cdot \vec{k} = {\rm Tr} \left( J \right) \phi + \det( \dfrac{dJ}{dt} \dot{\vec{X}}, \dot{\vec{X}} ).
\end{equation}

\end{proof}

\section{Minorsky's statement}

In order to establish Minorsky's statement for the generalized Li\'{e}nard system \eqref{eq8}, we introduce the following propositions.

\begin{proposition}
\label{prop3}
In the $\varepsilon$-vicinity of the \textit{slow} part of the \textit{critical manifold}, the \textit{slow invariant manifold} \eqref{eq9} of the generalized Li\'{e}nard system \eqref{eq8} is positive provided that $g'(x) \geqslant 0$ and under the previous assumptions $(I - IV)$.
\end{proposition}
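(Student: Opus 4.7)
The plan is to compute $\phi(\vec X,\e)$ explicitly for system \eqref{eq8} by direct differentiation, and then substitute the first-order Fenichel expansion of the slow invariant manifold so that the leading behaviour in $\e$ can be read off.

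First I would differentiate \eqref{eq8}: one derivative gives $\dot x = (y - F(x))/\e$ and $\dot y = -g(x)$, and a second yields
\[
\ddot x = -\frac{g(x)}{\e} - \frac{F'(x)(y - F(x))}{\e^{2}}, \qquad \ddot y = -\frac{g'(x)(y - F(x))}{\e}.
\]
Forming $\phi = \det(\ddot{\vec X}, \dot{\vec X}) = \ddot x\,\dot y - \ddot y\,\dot x$ and multiplying by $\e^{2}$ yields the clean algebraic identity
\[
\e^{2}\,\phi(\vec X,\e) \;=\; \e\,g(x)^{2} + F'(x)\,g(x)\,(y - F(x)) + g'(x)\,(y - F(x))^{2}.
\]

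Next I would invoke Fenichel's theorem on the slow part of the critical manifold $\{y = F(x):\,F'(x)\neq 0\}$. Plugging the ansatz $y = F(x) + \e\,y_{1}(x) + O(\e^{2})$ into \eqref{eq8} and matching orders in $\e$ gives $y_{1}(x) = -g(x)/F'(x)$, hence
\[
y - F(x) \;=\; -\frac{\e\,g(x)}{F'(x)} + O(\e^{2})
\]
uniformly on compact sub-branches bounded away from the turning points of $F$. Substituting this into the identity above, the two $O(\e)$ contributions cancel exactly, and one obtains
\[
\phi(\vec X,\e) \;=\; \frac{g'(x)\,g(x)^{2}}{F'(x)^{2}} + O(\e).
\]
Since $F'(x)^{2} > 0$ on the slow branch and $g(x)^{2} \geqslant 0$ by assumption (I), the sign of the leading term is governed by $g'(x)$, so $\phi \geqslant 0$ whenever $g'(x) \geqslant 0$.

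The main subtlety I expect is controlling the $O(\e)$ remainder: Fenichel's expansion is only uniformly valid on compact sub-branches on which $F'(x)^{2}$ is bounded below, so I would restrict the domain $D$ to avoid a neighbourhood of the turning point $x = a$ where $F'(x) = 0$. Under assumptions (I)--(IV), $g$ has an isolated zero at the origin, so away from that point the leading term is strictly positive and cannot be dominated by the remainder for $\e$ sufficiently small; at $x = 0$ both the leading term and the remainder are small, consistent with $\phi \geqslant 0$. This uniformity argument, although routine, is the only step that requires genuine care.
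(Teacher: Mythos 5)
There is a genuine gap in the order counting after the cancellation. Your identity $\e^{2}\phi=\e g^{2}+F'(x)\,g(x)\,(y-F)+g'(x)\,(y-F)^{2}$ is correct, and so is the expansion $y-F(x)=-\e g/F'+O(\e^{2})$ on the attracting branch. But once the two $O(\e)$ contributions cancel exactly, everything that survives in $\e^{2}\phi$ is of size $O(\e^{2})$, and at that order the unknown $O(\e^{2})$ remainder of the Fenichel expansion enters through the middle term $F'g\,(y-F)$: an $O(\e^{2})$ uncertainty in $y-F$ produces, after division by $\e^{2}$, an $O(1)$ contribution to $\phi$ --- exactly the size of the term $g'g^{2}/F'^{2}$ you keep. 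So a first--order expansion only yields $\phi=g'g^{2}/F'^{2}+O(1)$, which is vacuous. Carrying the expansion one order further, $y-F=\e y_{1}+\e^{2}y_{2}+O(\e^{3})$ with $y_{1}=-g/F'$ and $y_{2}=-y_{1}y_{1}'/F'$, one finds $F'g\,y_{2}=-g^{2}g'/F'^{2}+g^{3}F''/F'^{3}$, so the $g'$ term cancels and the true leading value of $\phi$ on the slow invariant manifold is $g^{3}F''/F'^{3}=g^{3}f'/f^{3}+O(\e)$. Its sign is governed by $f'(x)$, not by $g'(x)$; hence the conclusion ``$\phi\geqslant 0$ whenever $g'\geqslant 0$'' does not follow from your computation (it would instead require $f'\geqslant 0$ on the branch, which holds for the Van der Pol and the paper's examples but is not among assumptions (I)--(IV)).

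A secondary point: the proposition (and the paper's argument) concerns the sign of $\phi$ along the trajectory curve in the $\e$-vicinity of the slow part of the critical manifold, not only on the slow invariant manifold itself; in a full $\e$-neighbourhood $\phi$ necessarily changes sign, since its zero set (the flow curvature manifold) lies inside that neighbourhood, so the restriction to the trajectory matters. The paper proceeds quite differently: it writes $\phi=\ddot x\dot y+g'(x)\dot x^{2}$ (using $\ddot y=-g'(x)\dot x$) and argues qualitatively that on the right slow branch, below the critical manifold, $\dot x<0$, $\dot y<0$ and $\ddot x<0$, so both terms are nonnegative when $g'\geqslant 0$; no asymptotic expansion is used. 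If you wish to keep the explicit-expansion route, you must either control the signs of $\dot x$, $\dot y$, $\ddot x$ along the trajectory as the paper does, or push the Fenichel expansion to second order and then justify the positivity of the resulting leading term $g^{3}f'/f^{3}$.
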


\begin{proof}

According to the previous assumptions I. \& II. $g(x)$ is odd and continuous, so we have $g(0) = 0$. Thus, at the crossings with the $y$-axis the tangents to the \textit{trajectory curves} are horizontal ($\dot{y}/\dot{x} = 0$ since $g(0) = 0$), and at the crossings with the curve $y = F(x)$ they are vertical ($\dot{y}/\dot{x} = \infty$ since $y - F(x) = 0$). Moreover, since this gradient ($\dot{y}/\dot{x}$) is negative in the $\varepsilon$-vicinity of the \textit{slow} part of the \textit{critical manifold}, the \textit{trajectory curve} cannot leave its neighborhood and any tendency for it to move away from it would be counteracted by a rapid growth in magnitude of this negative gradient. Then, according to Lefschetz \cite{Lefschetz}:\\

``We see from (\ref{eq8}) that with increasing time:\\

\begin{itemize}
\item $x(t)$ increases above the \textit{critical manifold},\\
\item $x(t)$ decreases below the \textit{critical manifold},\\
\item $y(t)$ increases to the left of the $y$ axis,\\
\item $y(t)$ decreases to the right of the $y$ axis.''\\
\end{itemize}

Due to the symmetries of the generalized Li\'{e}nard system, we will only consider the right half part of the $xy$-plane to state this proposition.

Thus, we deduce from what precedes that below the \textit{slow} part of the \textit{critical manifold}, $x(t)$ decreases and $y(t)$ decreases. Let's remind that the \textit{trajectory curve} is below the \textit{slow} part of the \textit{critical manifold}.

So, if in the $\varepsilon$-vicinity of the \textit{slow} part of the \textit{critical manifold}, $x(t)$ decreases and $y(t)$ decreases, it follows that we have:
\begin{equation}
\label{eq20}
\begin{aligned}
& \dot{x}(t) < 0\\
& \dot{y}(t) < 0.
\end{aligned}
\end{equation}

Then, by applying these results to the generalized Li\'{e}nard system (\ref{eq8}) and while using assumptions (I - IV), we can prove that in the $\varepsilon$-vicinity of the \textit{slow} part of the \textit{critical manifold}, we have:

\begin{equation}
\label{eq21}
\begin{aligned}
& \ddot{x}(t) < 0\\
& \ddot{y}(t) > 0 \quad \mbox{if} \quad g'(x) \geqslant 0.
\end{aligned}
\end{equation}

If $x(t)$ decreases and $y(t)$ decreases and since $F(x)$ is monotone increasing for $x \geqslant a$, $-F(x)$ is monotone decreasing. Thus, $\dot{x}(t)$ decreases as time increases and so $\ddot{x}(t) < 0$. Since $\dot{y}(t) = - g(x)$, we have: $\ddot{y}(t) = - g'(x)\dot{x}(t)$. So, if $g'(x) \geqslant 0$, $\ddot{y}(t) > 0$. By considering that $\ddot{y}(t) = - g'(x)\dot{x}(t)$, the first order approximation in $\varepsilon$ of the \textit{slow invariant manifold} (\ref{eq9}) of the generalized Li\'{e}nard system (\ref{eq8}) reads:

\begin{equation}
\label{eq22}
\phi ( x, y, \varepsilon ) = \ddot{x}\dot{y} + g'(x)\dot{x}^2 = 0
\end{equation}

Thus, according to (\ref{eq20}-\ref{eq21}), it follows that $\phi ( x, y, \varepsilon ) \geqslant 0$ provided that $g'(x) \geqslant 0$.

\end{proof}

\begin{proposition}
\label{prop5}
In the $\varepsilon$-vicinity of the \textit{slow} part of the \textit{critical manifold}, the time derivative of the \textit{slow invariant manifold} \eqref{eq9} of the generalized Li\'{e}nard system \eqref{eq8} is positive provided that $g'(x) \geqslant 0$ and under the previous assumptions $(I - IV)$.
\end{proposition}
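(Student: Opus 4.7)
The strategy is to mimic the proof of Proposition~\ref{prop3}: compute $d\phi/dt$ explicitly and then perform a term-by-term sign analysis in the $\varepsilon$-vicinity of the slow part of the critical manifold, using the sign information \eqref{eq20}--\eqref{eq21} already established there. There are two natural entry points that should agree up to algebra.

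First, I would apply the corollary \eqref{eq11} with the Jacobian of \eqref{eq8}. The trace works out to $\operatorname{Tr}(J) = -f(x)/\varepsilon$, and computing $dJ/dt$ along the flow and the determinant in \eqref{eq11} produces
\[
\frac{d\phi}{dt} = -\frac{f(x)}{\varepsilon}\phi - \frac{f'(x)\dot{x}^{2}\dot{y}}{\varepsilon} + g''(x)\dot{x}^{3}.
\]
Alternatively, differentiating \eqref{eq22} directly and using $\ddot{y} = -g'(x)\dot{x}$ to cancel a cross term yields
\[
\frac{d\phi}{dt} = \dddot{x}\,\dot{y} + g'(x)\dot{x}\,\ddot{x} + g''(x)\dot{x}^{3},
\]
where $\dddot{x}$ is read off from $\varepsilon\dddot{x} = -g'(x)\dot{x} - f'(x)\dot{x}^{2} - f(x)\ddot{x}$ by differentiating the Li\'enard relation once more.

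In the second form the middle term $g'(x)\dot{x}\ddot{x}$ is immediately non-negative from \eqref{eq20}--\eqref{eq21} combined with the hypothesis $g'(x) \ge 0$, and for $g''(x)\dot{x}^{3}$ I would exploit the symmetries of $g$ (odd, with $xg(x)>0$) to argue that on the right slow branch $x \ge a$ the second derivative $g''(x)$ is non-positive, making that term non-negative since $\dot{x}^{3}<0$. This reduces the problem to a sign determination for $\dddot{x}\,\dot{y}$, equivalently for the combination $-f(x)\phi/\varepsilon - f'(x)\dot{x}^{2}\dot{y}/\varepsilon$ in the first form.

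The main obstacle I anticipate lies precisely in this last step: neither the sign of $f'(x)$ nor that of the trace piece $-f(x)\phi/\varepsilon$ (which is manifestly non-positive given $f(x)\ge 0$ and $\phi\ge 0$) is directly governed by assumptions I--IV, so the proof has to show that the negative contributions are overcome by the positive ones. The route I would pursue is to use $\phi = O(\varepsilon)$ in the $\varepsilon$-vicinity of the slow manifold to tame the $-f(x)\phi/\varepsilon$ contribution, and to eliminate $f(x)\dot{x}$ via $\varepsilon\ddot{x} = -g(x) - f(x)\dot{x}$ so that the remaining $f'(x)$ competition can be regrouped with the manifestly non-negative terms, leaving a net non-negative expression under assumptions I--IV and $g'(x)\ge 0$. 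This sign-chasing, analogous to but more delicate than the one in Proposition~\ref{prop3}, is where I expect the substance of the proof to reside.
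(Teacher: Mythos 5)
Your two expressions for $d\phi/dt$ are algebraically correct, and the second one is exactly the paper's \eqref{eq27}; your observation that $g'(x)\dot{x}\ddot{x}\geqslant 0$ follows from \eqref{eq20}--\eqref{eq21} and $g'(x)\geqslant 0$ also matches the paper. But two steps do not hold up. First, your treatment of $g''(x)\dot{x}^{3}$ is wrong: oddness of $g$ together with $xg(x)>0$ does \emph{not} force $g''(x)\leqslant 0$ on the branch $x\geqslant a$. The paper's own generalized Li\'enard example \eqref{eq64} has $g(x)=x^{3}/3+x$, hence $g''(x)=2x>0$ on the right slow branch, so there $g''(x)\dot{x}^{3}<0$ and your term-by-term positivity fails precisely in the case the statement is meant to cover. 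The paper never isolates this term: it groups $g''(x)\dot{x}^{2}+g'(x)\ddot{x}=\frac{d}{dt}\bigl(g'(x)\dot{x}\bigr)$, argues that $g'(x)\dot{x}$ decreases in time (using $\dot{x}<0$, $\ddot{x}<0$, $g'\geqslant 0$), so this grouped factor is $\leqslant 0$, and then multiplies by $\dot{x}<0$ to obtain a nonnegative contribution.

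Second, the term you yourself identify as the crux, $\dddot{x}\,\dot{y}$, is left unresolved: the routes you sketch (an $O(\varepsilon)$ bound on $\phi$ to tame $-f\phi/\varepsilon$, then a regrouping of the $f'$ term) are not carried out, so the proposal does not actually establish the sign of $d\phi/dt$. The paper settles this term with no $O(\varepsilon)$ estimate at all: from \eqref{eq25}, $\varepsilon\ddot{x}=-g(x)-f(x)\dot{x}$, and invoking the sign information of Proposition \ref{prop3} together with $f(x)=F'(x)>0$ for $x\geqslant a$ and $g'(x)\geqslant 0$, it argues that both summands decrease along the trajectory in the $\varepsilon$-vicinity of the slow branch, hence $\ddot{x}$ decreases, $\dddot{x}<0$, and therefore $\dddot{x}\,\dot{y}>0$ since $\dot{y}<0$. (As a minor point, $\dddot{x}\,\dot{y}$ is not exactly equivalent to $-f\phi/\varepsilon-f'\dot{x}^{2}\dot{y}/\varepsilon$; they differ by the term $g'(x)\dot{x}\ddot{x}$.) Without an argument giving the sign of $\dddot{x}$, and with the $g''$ claim corrected, your proof is incomplete.
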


\begin{proof}
The time derivative of the \textit{slow invariant manifold} (\ref{eq9}) reads:

\begin{equation}
\label{eq23}
\dfrac{d\phi }{dt} = \dddot{x}\dot{y} - \dddot{y} \dot{x} = 0
\end{equation}

From the generalized Li\'{e}nard system (\ref{eq8}), we find that:

\begin{equation}
\label{eq24}
\ddot{x}(t) = \dfrac{1}{\e} \left( \dot{y} - \dfrac{dF(x)}{dt} \right) = \dfrac{1}{\e} \left( \dot{y} - F'(x) \dot{x} \right)
\end{equation}

Taking into account that $F'(x) = f(x)$ and $\dot{y}(t) = - g(x)$, we have:

\begin{equation}
\label{eq25}
\ddot{x}(t) = \dfrac{1}{\e} \left( - g(x) - f(x) \dot{x} \right)
\end{equation}

We have previously stated that $x(t)$, $y(t)$, $\dot{x}(t)$ and $\dot{y}(t)$ decrease in the $\varepsilon$-vicinity of the \textit{slow} part of the \textit{critical manifold}. According to assumption IV, $F(x)$ is monotone increasing for $x \geqslant a$. So, $F'(x) = f(x) > 0$. Moreover, we have supposed that $g'(x) \geqslant 0$ which implies that $g(x)$ increases. Thus, both $-g(x)$ and $- f(x) \dot{x}$ decrease. It follows that $\ddot{x}(t)$ decreases. It leads to $\dddot{x}(t) < 0$. Now, starting from the generalized Li\'{e}nard system (\ref{eq8}), we find that:

\begin{equation}
\label{eq26}
\dddot{y}(t) = - g''(x)\dot{x}^2 - g'(x)\ddot{x}
\end{equation}

By replacing this expression (\ref{eq26}) in that of the time derivative of the \textit{slow invariant manifold} (\ref{eq23}), we obtain:

\begin{equation}
\label{eq27}
\dfrac{d\phi }{dt} = \dddot{x}\dot{y} + \dot{x} \left( g''(x)\dot{x}^2 + g'(x)\ddot{x} \right).
\end{equation}

Let's notice that:

\[
g''(x)\dot{x}^2 + g'(x)\ddot{x} = \dfrac{d}{dt} \left( g'(x) \dot{x} \right).
\]

Since we have stated that $\ddot{x}(t) < 0$, it implies that $\dot{x}(t) < 0$ decreases. Hence, provided that $g'(x) \geqslant 0$, $g'(x) \dot{x}$ decreases also. As a consequence, $\dfrac{d}{dt} \left( g'(x) \dot{x} \right) \leqslant 0$. Thus, since we have stated that $\dot{x}(t) < 0$, $\ddot{x}(t) < 0$ and $\dddot{x}(t) < 0$ all terms of $d\phi/dt$ are thus positive provided that $g'(x) \geqslant 0$.

\end{proof}

Now, let's prove Minorsky's statement for the generalized Li\'{e}nard system (\ref{eq8}).

\begin{proposition}
\label{prop6}
``\textit{each time the system absorbs energy the curvature of its trajectory decreases} and \textit{vice versa} when the energy is supplied by the system (e. g. braking) the curvature increases.''
\end{proposition}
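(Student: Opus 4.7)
The plan is to interpret Minorsky's claim as an opposite-sign relationship between $dE/dt$ and the flow-curvature indicator $d\phi/dt$ of (\ref{eq9}), and to deduce it from Propositions \ref{prop3} and \ref{prop5} together with one explicit dissipation computation. Throughout I would work on the attracting right branch of the slow manifold ($x\geq a$) under the hypotheses of those two propositions, namely assumptions (I)--(IV) together with $g'(x)\geq 0$; the symmetry $(x,y)\mapsto(-x,-y)$ inherited from assumption (I) extends the conclusion to the opposite branch.

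First I would fix the energy functional
\[
E(x,y)=\tfrac12 y^{2}+G(x),\qquad G(x)=\int_0^{x} g(s)\,ds,
\]
which is non-negative by (I) and is conserved in the undamped limit $F\equiv 0$ of (\ref{eq8}). Differentiating along the flow of (\ref{eq8}) yields
\[
\dot E = y\dot y+g(x)\dot x = -y g(x)+\frac{g(x)\bigl(y-F(x)\bigr)}{\varepsilon} = \frac{g(x)\bigl((1-\varepsilon)y-F(x)\bigr)}{\varepsilon},
\]
which on the slow part of the critical manifold ($y=F(x)$) reduces at leading order in $\varepsilon$ to $\dot E=-g(x)F(x)$.

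Next, on the attracting branch ($x\geq a$), assumptions (I) and (IV) give $g(x)>0$ and $F(x)\geq 0$, so $\dot E\leq 0$: the system supplies energy in the $\varepsilon$-vicinity. Proposition \ref{prop5} simultaneously yields $d\phi/dt\geq 0$, while Proposition \ref{prop3} gives $\phi\geq 0$, so the (unsigned) curvature $\kappa=\phi/\|\dot{\vec X}\|^{3}$ is non-negative with non-decreasing numerator. This proves the ``supplies energy $\Rightarrow$ curvature increases'' half of Minorsky's statement. The converse (``absorbs energy $\Rightarrow$ curvature decreases'') follows by the same computation in any region where $F(x)g(x)<0$---which, during a relaxation cycle, occurs near the fast jumps---since the signs of both $\dot E$ and $\dot\phi$ then reverse.

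The main obstacle is bridging the flow-curvature indicator $\phi$ and the geometric curvature $\kappa=-\phi/\|\dot{\vec X}\|^{3}$: strictly, $\dot\kappa$ contains an extra contribution $\phi \cdot d\|\dot{\vec X}\|^{-3}/dt$ that could in principle reverse its sign. Resolving this requires estimating $\tfrac{d}{dt}\|\dot{\vec X}\|^{2}=2(\dot x\ddot x+\dot y\ddot y)$ from the sign data (\ref{eq20})--(\ref{eq21}) and the explicit formula $\ddot x=-(g(x)+f(x)\dot x)/\varepsilon$ derived in (\ref{eq25}); in the $\varepsilon$-vicinity of the slow manifold the velocity magnitude is controlled by the slow dynamics and cannot grow fast enough to overwhelm the monotone growth of $\phi$ already established in Proposition \ref{prop5}, so $\dot\kappa$ inherits the sign of $\dot\phi$.
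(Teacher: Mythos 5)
Your core argument is essentially the paper's: pick an energy, show it dissipates on the attracting branch, and combine this with Propositions \ref{prop3} and \ref{prop5} to conclude that $\phi$ is nonnegative and increasing while $E$ decreases. The differences are in the details. The paper takes $E=\e\dot x^{2}/2+G(x)$, for which multiplying the Li\'enard equation by $\dot x$ gives the exact identity $dE/dt=-f(x)\dot x^{2}\leqslant 0$ wherever $f>0$, with no slow-manifold approximation needed. Your $E=y^{2}/2+G(x)$ also works, but your ``leading order'' reduction to $\dot E=-g(x)F(x)$ is not correct on the actual trajectory: there $y-F(x)=\e\dot x$, so the term $g(x)\left(y-F(x)\right)/\e=g(x)\dot x$ is $O(1)$, not negligible, and the exact expression is $\dot E=g(x)(\dot x-y)$, whose leading order is $-g\left(g/f+F\right)$ rather than $-gF$. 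The dropped term has the same sign, so your conclusion $\dot E\leqslant 0$ on the branch $x\geqslant a$ (where $g>0$, $y\approx F\geqslant 0$, $\dot x<0$) survives; it is cleaner to argue directly from $\dot E=g(x)(\dot x-y)$ together with the sign data already established in Proposition \ref{prop3}.

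Two of your additions are asserted rather than proved. First, the passage from $\phi$ to the normalized curvature $\kappa=\phi/\|\dot{\vec X}\|^{3}$: the claim that $\|\dot{\vec X}\|$ ``cannot grow fast enough'' is not established (on the slow branch $|\dot x|\approx g/f$ grows as $f$ decreases toward the fold), and the paper does not attempt this step at all --- it identifies the ``curvature'' in Minorsky's statement with the flow curvature manifold $\phi$ itself, so monotonicity of $\phi$ is all that is claimed or needed there. Second, your converse half (``absorbs energy $\Rightarrow$ curvature decreases'' near the fast jumps) invokes Propositions \ref{prop3} and \ref{prop5}, which are proved only in the $\e$-vicinity of the slow part of the critical manifold; the asserted sign reversal of $d\phi/dt$ near the jumps is not justified. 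The paper itself proves only the slow-branch half of the statement, so these are gaps only relative to your own extra ambitions, but as written they should either be proved or removed.
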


\begin{proof}

According to Berg\'{e} \textit{et al.} \cite{Berge} a classical way to express the variation of energy according to time in generalized Li\'{e}nard equation (\ref{eq2}) (in which we have posed $t \to \mu t$ and $\mu = 1 / \sqrt{\e}$) is to multiply this equation by $\dot{x}(t)$. By doing that, we obtain:

\begin{equation}
\label{eq28}
\e \dot{x}\ddot{x} + f\left( x \right) \dot{x}^2 + g\left( x \right)\dot{x} = 0.
\end{equation}

By taking $G'(x) = g(x)$, we find that:

\begin{equation}
\label{eq29}
\dfrac{d}{dt}\left( \e \dfrac{\dot{x}^2}{2} + G(x) \right) = - f(x) \dot{x}^2
\end{equation}

But, from assumption IV, it follows that $F(x)$ is monotone increasing for $x \geqslant a$. So, $F'(x) = f(x) > 0$. Let's consider that the \textit{energy} reads:

\begin{equation}
\label{eq30}
E = \dfrac{\e \dot{x}^2}{2} + G(x),
\end{equation}

where, according to Lefschetz \cite{Lefschetz}, ``in the ``spring'' interpretation $ \e \dot{x}^2/2$ is the kinetic energy and $G(x)$ is the potential energy\footnote{In fact Lefschetz \cite{Lefschetz} provided for the energy $E$ a different expression from the previous one (\ref{eq30}). However, it will established in the Appendix that they are exactly the same.}''. So, we have:

\begin{equation}
\label{eq31}
\dfrac{dE}{dt} = - f(x) \dot{x}^2 < 0.
\end{equation}

Thus, in the $\varepsilon$-vicinity of the \textit{slow} part of the \textit{critical manifold}, the \textit{curvature} $\phi(x, y, \e)$ increases while the energy $E$ of the generalized Li\'{e}nard system (\ref{eq8}) decreases as claimed by Minorsky.

\end{proof}

Now, let's establish a relationship between the \textit{flow curvature manifold} \eqref{eq9} and the \textit{energy} \eqref{eq30} of the generalized Li\'{e}nard system \eqref{eq8}. According to \eqref{eq9}, the \textit{flow curvature manifold} \eqref{eq22} of the generalized Li\'{e}nard system \eqref{eq8} reads:

\[
\phi ( x, y, \varepsilon ) = \ddot{x}\dot{y} + g'(x)\dot{x}^2 = 0
\]

By taking the derivative of the first equation of \eqref{eq8} and replacing into the previous one, we find:

\begin{equation}
\label{eq32}
\phi ( x, y, \varepsilon ) = \dfrac{1}{\e} \left( \dot{y} - f(x) \dot{x} \right) \dot{y} + g'(x)\dot{x}^2 = 0,
\end{equation}
which can be also written as:

\begin{equation}
\label{eq33}
\e \phi ( x, y, \varepsilon ) = \e g'(x)\dot{x}^2 +  \dot{y}^2  - f(x) \dot{x}\dot{y}.
\end{equation}

By using \eqref{eq30}, the \textit{flow curvature manifold} reads:

\begin{equation}
\label{eq34}
\e \phi ( x, y, \varepsilon ) = 2 g'(x) E +  \dot{y}^2 - 2 g'(x)G(x)  - f(x) \dot{x}\dot{y}.
\end{equation}

Let's notice that since $x g(x) > 0$ for $x \neq 0$ from assumption I, it follows that

\[
G(x) = \int_0^x g(s) \, \mathrm{d}s > 0 \quad \Longrightarrow \quad G'(x) = g(x)
\]
As a consequence, the \textit{energy} $E >0$. Moreover, since $F(x)$ is monotone increasing for $x \geqslant a$ from assumption I, it follows that

\[
F(x) = \int_0^x f(s) \, \mathrm{d}s > 0 \quad \Longrightarrow \quad F'(x) = f(x) > 0.
\]

We have also stated previously that $\dot{x}(t) < 0$ and $\dot{y}(t) < 0$. In Proposition 4, we have made the assumption that $g'(x) \geqslant 0$. Thus, taking into account all these considerations leads to the fact that the first and last terms of the right hand side of the \textit{flow curvature manifold} \eqref{eq34} are positive. So, let's focus on the sign of $H = \dot{y}^2 - 2 g'(x)G(x)$. Since $\dot{y} = -g(x)$, $g(x) = G'(x)$, this expression reads:

\begin{equation}
\label{eq35}
H =  G'(x)^2 - 2 G(x)G''(x).
\end{equation}
To solve this ordinary differential equation \eqref{eq35}, let's pose: $Y(x) = G(x)$ and then, $U = Y'(x)$. We obtain:

\begin{equation}
\label{eq36}
H =  U^2 - 2 Y U \dfrac{dU}{dY}.
\end{equation}
By posing $Z = U^2$, we have:

\begin{equation}
\label{eq37}
H =  Z - Y \dfrac{dZ}{dY}.
\end{equation}

The solution of $H = 0$ leads to $Z = C_1 Y$. But since we have posed $Z = U^2 = Y'(x)^2$, we have:

\begin{equation}
\label{eq38}
Y'(x)^2 = C_1 Y(x) \quad \Longrightarrow \quad \dfrac{dY}{dx} = \pm \sqrt{C_1 Y(x)}.
\end{equation}
Taking into account that we have posed $Y(x) = G(x)$, the solution of this differential equation is:

\begin{equation}
\label{eq39}
G(x) = \dfrac{C_1}{2} \left( x + C_2 \right)^2.
\end{equation}

\subsection{First case $H \geqslant 0$}
According to what precedes the conditions under which $H \geqslant 0$ are equivalent to $Y'(x)^2 \leqslant C_1 Y(x)$. So, we have:

\begin{equation}
\label{eq40}
\left(Y'(x) - \sqrt{C_1 Y(x)} \right)\left(Y'(x) + \sqrt{C_1 Y(x)} \right) \leqslant 0
\end{equation}
This inequality leads to two subcases. Either the first term is negative and the second positive or the reverse. The first subcase corresponds to:

\begin{equation}
\label{eq41}
\begin{aligned}
& Y'(x) \leqslant \sqrt{C_1 Y(x)} \quad \Longrightarrow \quad G(x) \leqslant \dfrac{C_1}{2} \left( x + C_2 \right)^2, \\
& Y'(x) \geqslant - \sqrt{C_1 Y(x)} \quad \Longrightarrow \quad G(x) \geqslant - \dfrac{C_1}{2} \left( x + C_2 \right)^2.
\end{aligned}
\end{equation}
However, we have posed $Y(x) =  G(x)$ and so, $Y'(x) =  g(x) >0$. Thus we have:

\begin{equation}
\label{eq42}
0 < Y'(x) \leqslant \sqrt{C_1 Y(x)} \quad \Longrightarrow \quad 0 < G(x) \leqslant \dfrac{C_1}{2} \left( x + C_2 \right)^2.
\end{equation}

The second subcase corresponds to:

\begin{equation}
\label{eq43}
\begin{aligned}
& Y'(x) \geqslant \sqrt{C_1 Y(x)} \quad \Longrightarrow \quad G(x) \geqslant \dfrac{C_1}{2} \left( x + C_2 \right)^2, \\
& Y'(x) \leqslant - \sqrt{C_1 Y(x)} \quad \Longrightarrow \quad G(x) \leqslant - \dfrac{C_1}{2} \left( x + C_2 \right)^2.
\end{aligned}
\end{equation}
This last subcase is inconsistent with the assumption $Y'(x) =  g(x) > 0$. So we must reject it. As a consequence, since $g(x) = G'(x)$ we deduce from \eqref{eq42} that:

\begin{equation}
\label{eq44}
0 < g(x) \leqslant C_1\left( x + C_2 \right).
\end{equation}

This first case which corresponds to the Van der Pol \cite{Vdp1926} and Li\'{e}nard \cite{Lienard} systems will be exemplified in the last section.

\subsection{Second case $H \leqslant 0$}
Now let's analyze the conditions under which $H \leqslant 0$ are equivalent to $Y'(x)^2 \geqslant C_1 Y(x)$. So, we have:

\begin{equation}
\label{eq45}
\left(Y'(x) - \sqrt{C_1 Y(x)} \right)\left(Y'(x) + \sqrt{C_1 Y(x)} \right) \geqslant 0
\end{equation}
This inequality leads to two subcases. Either both terms are positive or negative.

The first subcase corresponds to:

\begin{equation}
\label{eq46}
\begin{aligned}
& Y'(x) \geqslant \sqrt{C_1 Y(x)} \quad \Longrightarrow \quad G(x) \geqslant \dfrac{C_1}{2} \left( x + C_2 \right)^2, \\
& Y'(x) \geqslant - \sqrt{C_1 Y(x)} \quad \Longrightarrow \quad G(x) \leqslant - \dfrac{C_1}{2} \left( x + C_2 \right)^2.
\end{aligned}
\end{equation}
Since we have posed $Y(x) =  G(x)$ and so, $Y'(x) =  g(x) >0$. Thus we have:

\begin{equation}
\label{eq47}
Y'(x) \geqslant \sqrt{C_1 Y(x)} \quad \Longrightarrow \quad G(x) \geqslant \dfrac{C_1}{2} \left( x + C_2 \right)^2.
\end{equation}

The second subcase corresponds to:

\begin{equation}
\label{eq48}
\begin{aligned}
& Y'(x) \leqslant \sqrt{C_1 Y(x)} \quad \Longrightarrow \quad G(x) \leqslant \dfrac{C_1}{2} \left( x + C_2 \right)^2, \\
& Y'(x) \leqslant - \sqrt{C_1 Y(x)} \quad \Longrightarrow \quad G(x) \leqslant - \dfrac{C_1}{2} \left( x + C_2 \right)^2.
\end{aligned}
\end{equation}
This last subcase is inconsistent with the assumption $Y'(x) =  g(x) > 0$. So, we must reject it. As a consequence, since $g(x) = G'(x)$, we deduce from \eqref{eq47} that:

\begin{equation}
\label{eq49}
g(x) \geqslant C_1\left( x + C_2 \right).
\end{equation}
This second case corresponds to the generalized Li\'{e}nard systems \eqref{eq8} and will be also exemplified in the last section.

\section{Curvature and energy of generalized Li\'{e}nard systems}

As previously recalled, according to Minorsky, \textit{energy} and \textit{curvature of the trajectory} are linked by a relationship that he unfortunately didn't give. In this section we establish such a relationship for the generalized Li\'{e}nard systems \eqref{eq8}. By taking: $H =  \dot{y}^2 - 2 g'(x)G(x)$, the \textit{flow curvature manifold} \eqref{eq34} reads:

\begin{equation}
\label{eq50}
\e \phi ( x, y, \varepsilon ) = 2 g'(x) E +  H  - f(x) \dot{x}\dot{y}.
\end{equation}
By taking the time derivative of this expression \eqref{eq50}, we obtain:

\begin{equation}
\label{eq51}
\e \dfrac{d \phi}{dt} = 2 g''(x) \dot{x}E + 2g'(x)\dfrac{dE}{dt} +  \dfrac{dH}{dt}  - f'(x) \dot{x}^2\dot{y} - f(x) \ddot{x}\dot{y} - f(x) \dot{x}\ddot{y}.
\end{equation}
But according to Corollary 3, we have:

\[
\dfrac{d\phi }{dt} = {\rm Tr} \left( J \right) \phi + \det( \dfrac{dJ}{dt} \dot{\vec{X}}, \dot{\vec{X}} ).
\]

For the generalized Li\'{e}nard systems \eqref{eq8}, we obtain:

\begin{equation}
\label{eq52}
\e \dfrac{d \phi}{dt} = - f'(x) \dot{x}^2\dot{y} - f(x) \ddot{x}\dot{y} + f(x) \dot{x}\ddot{y}.
\end{equation}

By equalling Eqs. (\ref{eq51}-\ref{eq52}), we have:

\begin{equation}
\label{eq53}
2 g''(x) \dot{x}E + 2g'(x)\dfrac{dE}{dt} +  \dfrac{dH}{dt}  = 2 f(x) \dot{x}\ddot{y}.
\end{equation}

Let's notice that:

\begin{equation}
\label{eq54}
g''(x) \dot{x}E + g'(x)\dfrac{dE}{dt}  = \dfrac{d}{dt}\left( g'(x)E \right).
\end{equation}
Thus Eq. \eqref{eq53} can be written as:

\begin{equation}
\label{eq55}
\dfrac{d}{dt}\left( 2 g'(x)E + H \right) = 2 f(x) \dot{x}\ddot{y}.
\end{equation}
But since $f(x) >0$, $\dot{x} < 0$ and $\ddot{y} >0$, the right hand side of Eq. \eqref{eq55} is negative. Moreover, we have assumed in Proposition 4 that $g'(x) \geqslant 0$ and stated that $E >0$. Hence, we have:

\begin{equation}
\label{eq56}
\dfrac{d}{dt}\left( 2 g'(x)E \right) < -  \dfrac{dH}{dt}.
\end{equation}
So let's focus on the sign of $dH / dt$. By replacing $H$ by its expression given by Eq. \eqref{eq35}, we find that:

\begin{equation}
\label{eq57}
\dfrac{dH}{dt} = - 2 G(x)G'''(x)\dot{x}.
\end{equation}

Since $G(x) > 0$ and $\dot{x} < 0$, we have the following two cases.

If $G'''(x) \leqslant 0$, $dH/dt \leqslant 0$, then $\dfrac{d}{dt}\left( 2 g'(x)E \right) < 0$. This corresponds to the previous first case $H \geqslant 0$ which led to $0 < g(x) \leqslant C_1\left( x + C_2 \right)$ (see Sect. 5.1). This is the case of Van der Pol \cite{Vdp1926} and Li\'{e}nard \cite{Lienard} systems that will be exemplified in the next section.

If $G'''(x) \geqslant 0$, $dH/dt \geqslant 0$, then $\dfrac{d}{dt}\left( 2 g'(x)E \right)$ is still negative but has now the positive upper bound $dH/dt$. This corresponds to the previous second case $H \leqslant 0$ which led to $g(x) \geqslant C_1\left( x + C_2 \right)$ (see Sect. 5.2). This is the case of the generalized Li\'{e}nard systems \eqref{eq8} that will be also exemplified in the next section.

\section{Applications}

In this last section we apply the results established in this work to the classical Van der Pol \cite{Vdp1926} and to the generalized Li\'{e}nard \cite{LlibreMereu} \textit{singularly perturbed systems}.

\subsection{Van der Pol \textit{singularly perturbed system}}

In his original publication of 1926, Balthasar Van der Pol \cite{Vdp1926} provided the following prototypic ordinary differential equation for modeling the relaxation oscillations:

\begin{equation}
\label{eq58}
\dfrac{d^2x}{dt^2} + \mu \left( x^2 - 1 \right) \dfrac{dx}{dt} +  x = 0.
\end{equation}
By posing: $t \to \mu t$ and $\mu = 1 / \sqrt{\e}$, such equation \eqref{eq58} can be written as:

\begin{equation}
\label{eq59}
\begin{aligned}
\varepsilon \dot {x} & = y - \left( \dfrac{x^3}{3} - x \right), \\
            \dot {y} & = - x.
\end{aligned}
\end{equation}
Thus we have: $F(x) = \dfrac{x^3}{3} - x$, $f(x) = F'(x) = x^2 - 1$, $g(x) = x$, $g'(x) = 1$ and $G(x) = \dfrac{x^2}{2} + C$ where we can take $C = 0$. According to \eqref{eq35}, we have: $H = 0$, and so $\dfrac{dH}{dt} = 0$. From \eqref{eq31} it follows that:

\begin{equation}
\label{eq60}
\dfrac{dE}{dt} = -  \left( x^2 - 1 \right) \dot{x}^2.
\end{equation}

The function $x^2 - 1 \geqslant 0$ for $x \in ] - \infty, - 1] \bigcup [+1, + \infty [$, and so $dE/dt < 0$ within this interval which contains the \textit{flow curvature manifold} that is to say, a first order approximation in $\varepsilon$ of the \textit{slow invariant manifold} of Van der Pol \textit{singularly perturbed system} \eqref{eq59}. According to Eq. \eqref{eq22} and since $g'(x) = 1$, this \textit{flow curvature manifold} reads:

\begin{equation}
\label{eq61}
\phi ( x, y, \varepsilon ) = \ddot{x}\dot{y} + \dot{x}^2.
\end{equation}

On Fig. 1, we have represented, the \textit{trajectory curve}, integral of Van der Pol \textit{singularly perturbed system} \eqref{eq59}, i.e. the \textit{limit cycle} (in red), the \textit{critical manifold} $y - F(x) = 0$ with $F(x) = x^3 / 3 - x$, i.e. the zero order approximation in $\varepsilon$ of the \textit{slow invariant manifold} equation of this system (in green) and the roots of the equation $f(x) = x^2 - 1 = 0$ (in dot dashed black).

\begin{figure}[htbp]
\includegraphics[width=12cm,height=12cm]{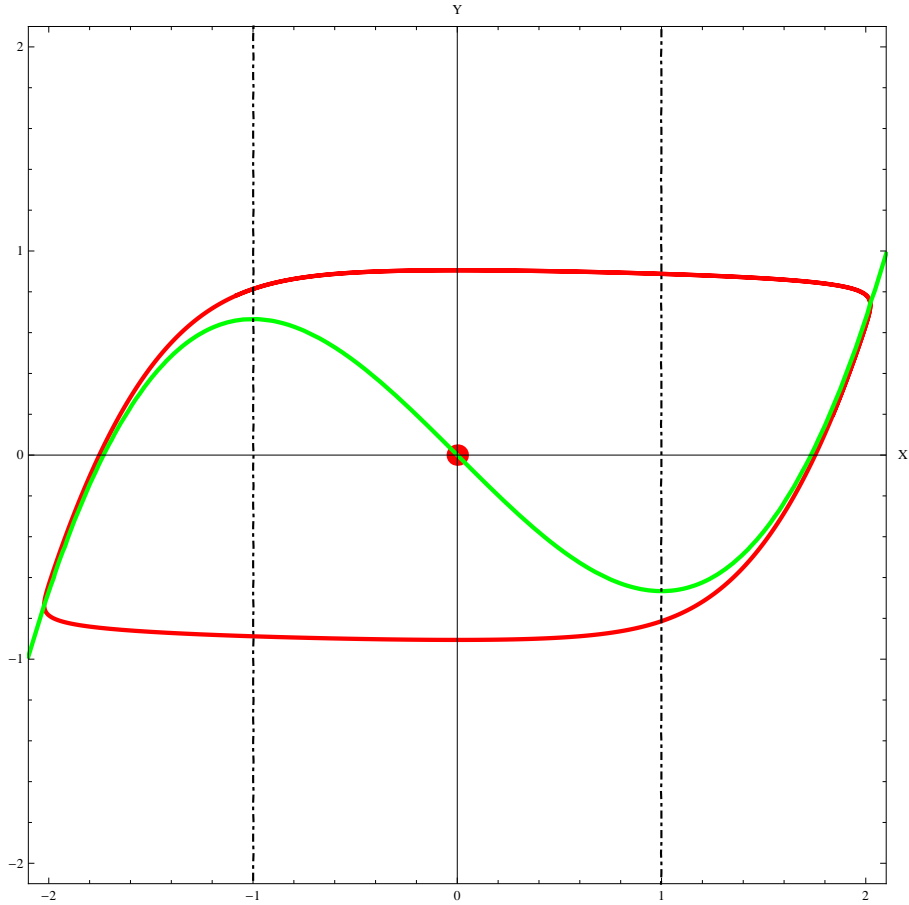}
\caption{Limit cycle and critical manifold of Van der Pol system \eqref{eq59}.}
\label{fig1}
\end{figure}

Due to the symmetries of the Van der Pol system \eqref{eq59}, let's focus on the right half part of the $xy$-plane of Fig. 1. It is easy to verify on the one hand that above the \textit{critical manifold} (in green), $y - F(x) > 0$ and, on the other hand that the \textit{trajectory curve}, i.e. the \textit{slow part} of the \textit{limit cycle} is below the \textit{critical manifold}. It follows that $\dot{x} < 0$ in the $\varepsilon$-vicinity of the \textit{slow invariant manifold} of Van der Pol system \eqref{eq59}. Moreover, since $x > 0$ in this part of the phase plane, we have $\dot{y} = - x < 0$ because $g'(x) =  1 \geqslant 0$. Thus both $x(t)$ and $y(t)$ decrease in this domain as previously stated in Sect. 5. Moreover, according to assumption IV, $F(x) = x^3 /3 - x$ is monotone increasing for $x \geqslant \sqrt{3}$. So $-F(x)$ is monotone decreasing and it has been stated by Lefschetz \cite{Lefschetz} that ``$y(t)$ decreases to the right of the $y$-axis'' which implies that $\dot{y} < 0$. This leads to $\dot{x}$ decreases in the $\varepsilon$-vicinity of the \textit{slow invariant manifold} and so $\ddot{x} < 0$. Let's notice that this result is intuitive because the point $M(x(t),y(t))$ of the \textit{trajectory curve}, integral of the Van der Pol system \eqref{eq59}, is then moving on the \textit{slow part} of the \textit{limit cycle}. From all these considerations, we deduce that $\ddot{x}\dot{y} > 0$. We thus prove that the \textit{flow curvature manifold} \eqref{eq61} that is to say, a first order approximation in $\varepsilon$ of the \textit{slow invariant manifold} of Van der Pol \textit{singularly perturbed system} \eqref{eq59} is positive. Such result could have been directly deduced from Proposition 4 since for Van der Pol system \eqref{eq59}, we have $g'(x) =  1 \geqslant 0$.

According to Eq. \eqref{eq27} and since $g'(x)=1$, $g''(x) = 0$, the time derivative of the \textit{slow invariant manifold} reads for the Van der Pol system \eqref{eq59}:

\begin{equation}
\label{eq62}
\dfrac{d\phi }{dt} = \dddot{x}\dot{y} + \ddot{x}\dot{x}
\end{equation}

\newpage

From the Van der Pol system \eqref{eq59} we deduce that:

\begin{equation}
\label{eq63}
\ddot {x} = \dfrac{1}{\varepsilon} \left[ \dot{y} - \left( x^2 -  1 \right) \dot{x} \right] = \dfrac{1}{\varepsilon} \left[ - x - \left( x^2 -  1 \right) \dfrac{1}{\varepsilon} \left[ y - F \left( x \right) \right] \right]. \\
\end{equation}

According to assumption IV, $F(x) = x^3 /3 - x$ is monotone increasing for $x \geqslant \sqrt{3}$. So $-F(x)$ is monotone decreasing and it has been stated by Lefschetz \cite{Lefschetz} that: `` $x(t)$ decreases below the \textit{critical manifold} and $y(t)$ decreases to the right of the $y$ axis'', it follows that $\ddot{x}$ decreases and so, $\dddot{x} < 0$. As a consequence $d\phi / dt >0$. Thus, since we have established that $dE /dt < 0$ and $d\phi / dt >0$, in the the $\varepsilon$-vicinity of the \textit{slow invariant manifold} of Van der Pol system \eqref{eq59}, Minorsky's statement is proved for such system.

\subsection{Generalized Li\'{e}nard \textit{singularly perturbed system}}

According to Llibre \textit{et al.} \cite{LlibreMereu}, an example of generalized Li\'{e}nard system can be written as follows:

\begin{equation}
\label{eq64}
\begin{aligned}
\varepsilon \dot {x} & = y - \left( \dfrac{x^5}{5} + \dfrac{x^3}{3} - x \right), \\
            \dot {y} & = - \left( \dfrac{x^3}{3} + x \right).
\end{aligned}
\end{equation}
Thus we have: $F(x) = \dfrac{x^5}{5} + \dfrac{x^3}{3} - x$, $f(x) = F'(x) = x^4 + x^2 - 1$, $g(x) = \dfrac{x^3}{3} + x$, $g'(x) = x^2 + 1$ and $G(x) = \dfrac{x^4}{12} + \dfrac{x^2}{2} + C$ where we can take $C = 0$.\\

According to \eqref{eq35} we have: $H = x^2 \left( \dfrac{x^4}{18} + \dfrac{x^2}{2} + 1 \right) \leqslant 0$, and according to \eqref{eq57}, $\dfrac{dH}{dt} = -4x \left( \dfrac{x^4}{12} + \dfrac{x^2}{2} \right) \dot{x} \geqslant 0$, because it has been stated that $\dot{x} < 0$. From \eqref{eq31}, it follows that:

\begin{equation}
\label{eq65}
\dfrac{dE}{dt} = -  \left(x^4 + x^2 - 1 \right) \dot{x}^2.
\end{equation}

The function $x^4 + x^2 - 1 \geqslant 0$ for $x \in ] - \infty, - \alpha] \bigcup [+\alpha, + \infty [$ with $\alpha = \sqrt{\dfrac{\sqrt{5}-1}{2}}$ and so, $dE/dt < 0$ within this interval which contains the \textit{flow curvature manifold} that is to say, a first order approximation in $\varepsilon$ of the \textit{slow invariant manifold} of generalized Li\'{e}nard \textit{singularly perturbed system} \eqref{eq64}. According to Eq. \eqref{eq22} and since $g'(x) = x^2 + 1$, this \textit{flow curvature manifold} reads:

\begin{equation}
\label{eq66}
\phi ( x, y, \varepsilon ) = \ddot{x}\dot{y} + \left( x^2 + 1 \right) \dot{x}^2.
\end{equation}

Using the same considerations as previously, it is easy to prove that the \textit{flow curvature manifold} \eqref{eq66} that is to say, a first order approximation in $\varepsilon$ of the \textit{slow invariant manifold} of generalized Li\'{e}nard \textit{singularly perturbed system} \eqref{eq64} is positive. Such result could have been directly deduced from Proposition 4 since for generalized Li\'{e}nard system \eqref{eq64}, we have $g'(x) = x^2 + 1 > 0$.

According to Eq. \eqref{eq27}, the time derivative of the \textit{slow invariant manifold} reads for generalized Li\'{e}nard system \eqref{eq64}:

\begin{equation}
\label{eq67}
\dfrac{d\phi }{dt} = \dddot{x}\dot{y} + \dfrac{d}{dt} \left( g'(x) \dot{x} \right)
\end{equation}

Thus from Proposition 5, i.e. since $g'(x)= x^2 + 1 > 0$ and by using the same considerations as previously, it is easy to prove that $d\phi / dt >0$. Therefore since we have established that $dE /dt < 0$ and $d\phi / dt >0$, in the the $\varepsilon$-vicinity of the \textit{slow invariant manifold} of generalized Li\'{e}nard system \eqref{eq64}, Minorsky's statement is proved for such system.

\section{Conclusion}

In this work, by using the \textit{Flow Curvature Method}, we have stated that in the $\varepsilon$-vicinity of the \textit{slow invariant manifold} of generalized Li\'{e}nard systems, the \textit{curvature of trajectory curve} increases while the \textit{energy} of such systems decreases. Hence we proved Minorsky's statement for the generalized Li\'{e}nard systems dating from half a century. Moreover we established a relationship between \textit{curvature} and \textit{energy} for such systems that he didn't provide. Some perspectives to be given to this work should be on the one hand analyze how \textit{curvature} and \textit{energy} could be related to the number of \textit{limit cycles} of such planar \textit{singularly dynamical systems}. On the other hand, it should be interesting to investigate if these results could be extended to higher dimensional \textit{singularly dynamical systems}.

It might also be interesting to study the relation between energy considerations and entropy concepts that have been used in the context of \textit{slow manifold} computation, see e.g. \cite{Leb04,Leb10}, where it has been demonstrated that minimum entropy production and minimum curvature are connected. This seems to be a conceptual analogy of classical thermodynamics with entropy characterizing the degree of energy dissipation.

\section{Appendix}

Starting from \eqref{eq8} we have the following equation:

\[
\e \ddot{x} + f\left( x \right) \dot{x} + g\left( x \right) = 0.
\]
By multiplying this equation by $\dot{x}(t)$ and by considering that $f(x) = F'(x)$, $g(x) = G'(x)$ we obtain:

\[
\e \dot{x}\ddot{x} + \left( F'(x)\dot{x} \right) \dot{x} + \left( G'(x)\dot{x} \right) = 0.
\]
Since $\e \ddot{x} = \dot{y} - F'(x)\dot{x}$ we have:

\[
\dot{x}\dot{y} +  G'(x)\dot{x} = 0.
\]
But $\e \dot{x} = y - F(x)$, and we find:

\[
y\dot{y} +  \e G'(x)\dot{x} = F(x)\dot{y}.
\]
Finally we obtain:

\[
\dfrac{d}{dt} \left( \dfrac{y^2}{2} +  \e G(x) \right) = F(x)\dot{y}.
\]
Then starting from $y = \e \dot{x} + F(x)$ we find that:

\[
\e \dfrac{d}{dt} \left( \e \dfrac{\dot{x}^2}{2} + G(x) \right) + \dfrac{d}{dt} \left( \e F(x) \dot{x} +\dfrac{F^2(x)}{2} \right) = F(x)\dot{y}.
\]
After simplifications we obtain the following equation:

\[
\dfrac{d}{dt} \left( \e \dfrac{\dot{x}^2}{2} + G(x) \right) = - f(x)\dot{x}^2,
\]

which is identical to Eq. \eqref{eq30}.

\section*{Acknowledgments}

The second author is supported by the Klaus-Tschira Foundation (Germany), grant 00.003.2019. The third author is partially supported the Ministerio de Ciencia, Innovaci\'on y Universidades, Agencia Estatal de Investigaci\'on grants MTM2016-77278-P (FEDER) and PID2019-104658GB-I00 (FEDER), the Ag\`encia de Gesti\'o d'Ajuts Universitaris i de Recerca grant 2017SGR1617, and the H2020 European Research Council grant MSCA-RISE-2017-777911.

\end{document}